\documentclass[a4paper,11pt]{amsart}

\usepackage{enumerate}

\newtheorem{theorem}{Theorem}
\newtheorem{proposition}{Proposition}[section]

\newtheorem{lemma}[proposition]{Lemma}
\theoremstyle{definition}

\theoremstyle{remark}


\newcommand{\abs}[1]{\lvert#1\rvert}

\newcommand{\R}{\mathbf{R}}
\newcommand{\N}{\mathbf{N}}

\newcommand{\st}{\: :\:}

\newcommand{\weakto}{\rightharpoonup}
\newcommand{\muleb}[1]{\mathcal{L}^{#1}}

\newcommand{\Norm}[1]{\Vert#1\Vert}

\date{\today}
\title[Explicit approximation of symmetrization]{Explicit approximation of the symmetric rearrangement by polarizations}
\author{Jean Van Schaftingen}
\thanks{The author was supported by the Fonds sp\'eciaux de recherche (Universit\'e catholique de Louvain)}
\address{Universit\'e catholique de Louvain\\ D\'epartement de Math\'ematique\\
Chemin du Cyclotron, 2 \\ 1348 Louvain-la-Neuve\\ Belgium}
\email{Jean.VanSchaftingen@uclouvain.be}

\subjclass[2000]{Primary 26D15; Secondary 35A25}
\keywords{Symmetric rearrangement, Schwarz symmetrization, polarization, two-point rearrangement, P\'olya--Szeg\H o inequality, approximation of symmetrization, Steiner symmetrization, foliated Schwarz symmetrization, spherical cap rearrangement, discrete rearrangement}

\begin{document}

\maketitle

\begin{abstract}
We give an explicit sequence of polarizations such that for every measurable function, the sequence of iterated polarizations converge to the symmetric rearrangement of the initial function.
\end{abstract}

\section{Introduction}

The symmetric rearrangement is a tool used in the study of isoperimetric inequalities and symmetry of the solution of variationnal problem. Given $u : \R^N \to \R^+ \cup \{+\infty\}$, the symmetric rearrangement, or Schwarz symmetrization, $u^* : \R^N \to \R^+ \cup \{+\infty\}$ is the unique function such that for every $\lambda > 0$, there exists $R \ge 0$ such that 
\[
 \{ x \in \R^N \st u^*(x) > \lambda\}=B(0,R),
\]
and
\[
 \muleb{N} \{ x \in \R^N \st u^*(x) > \lambda\}=\muleb{N}\{ x \in \R^N \st u(x) > \lambda\}.
\]
The function $u^*$ is thus a radial and radially decreasing function whose sublevel sets have the same measure as those of $u$.

Since rearrangement preserves the measure of sublevel sets, if $u \in L^p(\R^N)$, then $u^* \in L^p(\R^N)$ and
\begin{equation}
\label{eqCavalieri}
 \int_{\R^N} \abs{u^*}^p =\int_{\R^N} \abs{u}^p.
\end{equation}
Rearrangements also brings the mass of a function around the origin: if $u \in L^p(\R^N)$ and $v \in L^{\frac{p}{p-1}}(\R^N)$, 
\begin{equation}
 \label{eqHardyLittlewood}
 \int_{\R^N} u^* v^* \ge \int_{\R^N} uv,
\end{equation}
and finally, if $u, v \in L^p(\R^N)$
\[
 \int_{\R^N} \abs{u^*-v^*}^p \le \int_{\R^N} \abs{u-v}^p.
\]
(When $p=2$, the latter inequality is a consequence of \eqref{eqCavalieri} and \eqref{eqHardyLittlewood}.) The proof of these inequalities relies essentially on the monotonicity and preservation of measure of the symmetrization of sets \cite{CroweZweibelRosenbloom1986}.

Symmetrization also has more geometrical properties, such as the P\'olya-Szeg\H o inequality: if $u \in W^{1,p}(\R^N)$ is nonnegative, then $u^* \in W^{1, p}(\R^N)$, and
\begin{equation}
\label{ineqPS}
 \int_{\R^N} \abs{\nabla u^*}^p \le \int_{\R^N} \abs{\nabla u}^p.
\end{equation}
Another geometrical is the Riesz--Sobolev rearrangement inequality: if $u \in L^p(\R^N)$, $v \in L^q(\R^N)$ and $w \in L^r(\R^N)$ are nonnegative functions and $\frac{1}{p}+\frac{1}{q}+\frac{1}{r}=2$, then
\begin{equation}
\label{ineqRS}
 \int_{\R^{N}} \int_{\R^N} u(x)v(y)w(x-y)\,dx\,dy \le \int_{\R^N}\int_{\R^N}u^*(x)v^*(y)w^*(x-y)\,dx\,dy.
\end{equation}

Polarization is a tool to study and prove geometrical properties of the rearrangement \cite{Baernstein1994, Dubinin1985, Dubinin1987, Dubinin1991, Solynin1996, Wolontis1952}.
If $H \subset \R^N$ is a closed halfspace, $\sigma_H : \R^N \to \R^N$ is the reflexion with respect to $\partial H$ and $u : \R^N \to \R$, the polarization of $u$ with respect to $H$ is the function $u^H : \R^N \to \R$ defined by
\[
  u^H=\begin{cases}
        \max \big(u(x), u (\sigma_H(x))\big) &\text{if $x \in H$},\\
        \min \big(u(x), u (\sigma_H(x))\big) &\text{if $x \not \in H$},
      \end{cases}
\]
The polarizations compares thus the values of the function on both sides of $\partial H$, and keeps the larger value in $H$. The polarization is also called two-point rearrangement.

A key point for the proof of symmetrization inequalities is that symmetrization is a limit of polarizations. Brock and Solynin have proved that for every $u \in L^p(\R^N)$, there exists a sequence of closed half-spaces $(H_n)_{n \ge 1}$ such that 
\[
  u^{H_1 H_2 \dots H_{n-1}H_n} \to u^*
\]
in $L^p(\R^N)$ as $n \to \infty$ \cite{BrockSolynin2000}. Noting that $\Norm{\nabla u^{H_1\dots H_n}}_{L^p}=\Norm{\nabla u}_{L^p}$, it follows that $u^{H_1 H_2 \dots H_{n-1}H_n} \weakto u^*$ weakly in $L^p$ and that one has the P\'olya--Szeg\H o inequality
\[
 \Norm{\nabla u}_{L^p} \le \liminf_{n \to \infty} \Norm{ \nabla u^{H_1\dots H_n}}_{L^p}=\Norm{\nabla u}_{L^p}.
\]
Whereas polarization allows to prove easily the Poly\'a--Szeg\H o inequality \eqref{ineqPS}, it does not give a proof of the Riesz--Sobolev rearrangement inequality \eqref{ineqRS}, as the integral $\int_{\R^{N}}\int_{\R^{N}} u(x)v(y) w(x-y)\,dx\,dy$ can decrease under polarization of $u$, $v$ and $w$ \cite[Corollary 4.3]{VanSchaftingen2006AIHP}. 

The convergence result of Brock and Solynin has been improved in a first way by proving that the sequence of polarizations can be chosen independently of $u \in L^p(\Omega)$, i.e., there exists a sequence of closed half-spaces $(H_n)_{n \ge 1}$ such that for every $u \in L^p(\R^N)$,
\[
 u^{H_1 H_2 \dots H_{n-1}H_n} \to u^*
\]
in $L^p(\R^N)$ as $n \to \infty$ \cite{VanSchaftingen2006PAMS}. 

While the proofs of both convergence results construct the sequence by some maximization procedure at each step, it is difficult to imagine writing down explicitely a sequence given by the proofs. This difficulty was first overcome by establishing convergence under a density condition on the sequence $(H_n)$ \cite{VanSchaftingen2006TMNA}. As a byproduct, one could obtain the convergence of random sequences of polarizations. However, the proof was indirect: the convergence relied on the fact that the sequence contained subsequences similar to the sequence obtained in the previous less explicit results.

\bigskip

The goal of this paper is to obtain an explicit sequence of closed half-spaces for which the convergence of the iterated polarizations can be proved directly and simply. We prove that if $(H_n)_{n \ge 1}$ is a dense sequence in the set of closed halfspaces of which $0$ is an interior point, if $u \in L^p(\Omega)$, and if one sets
\[
  \left\{
\begin{aligned}
u_0&=u,\\
u_{n+1}&=u_n^{H_1 \dots H_{n+1}},
\end{aligned}
  \right.
\]
i.e., $u_n$ is obtained by polarization $u$ iteratively $\frac{n(n+1)}{2}$ times, then, 
\[
 u_n \to u
\]
in $L^p(\R^N)$ as $n \to \infty$. 

There are three main steps in the proof. First, as in the previous work \cite{Baernstein1994, BrockSolynin2000, VanSchaftingen2006PAMS, VanSchaftingen2006TMNA}, one begins by remarking that the sequence $(u_n)_{n \in \N}$ is relatively compact (see lemma~\ref{lemmaCompactness}). Next, and this is the key novelty, we use the polarizations inequalities of lemma~\ref{lemmaPolarizationInequality} to prove that any cluster point satisfies $v^{H_k}=v$ for every $k \in \N$. The last step consists in concluding therefrom with classical arguments that $v=u^*$. This proof seems one of the most direct proofs of the approximation of symmetrization by polarizations.

\bigskip 

The paper is organized as follows. In section~\ref{sectionTools}, we present the tools used in the proof of our main convergence result. In section~\ref{sectionProof}, we prove the convergence result. Finally, in section~\ref{sectionExtension}, we discuss various variants and extensions of our results.

\section{Tools}
\label{sectionTools}

In this section, we study the three main ingredients of our proof. The material of this section is not new, but is presented here with some detail to give to the reader an idea of a complete self-contained proof of the convergence of our iteration scheme.

\subsection{Continuity and compactness}
First let us recall the continuity properties of symmetrization. The first lemma is a consequence of well-known inequalities for symmetrization.

\begin{lemma}
\label{lemmaSymmetrizationContinuous}
Let $(u_n)_{n \in \N}$ in $L^p(\R^N)$ be nonnegative and converge to $u \in L^p(\R^N)$. Then $u_n^* \in L^p(\R^N)$, $u^* \in L^p(\R^N)$ and  $u_n^* \to u^*$ in $L^p(\R^N)$.
\end{lemma}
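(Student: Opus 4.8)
The plan is to derive the statement directly from the rearrangement inequalities recalled in the introduction. By Cavalieri's principle \eqref{eqCavalieri}, $\Norm{u_n^*}_{L^p} = \Norm{u_n}_{L^p}$ and $\Norm{u^*}_{L^p} = \Norm{u}_{L^p}$, so $u_n^*$ and $u^*$ belong to $L^p(\R^N)$ and $\Norm{u_n^*}_{L^p} \to \Norm{u^*}_{L^p}$. If one is willing to invoke the contraction inequality
\[
 \int_{\R^N} \abs{u_n^* - u^*}^p \le \int_{\R^N} \abs{u_n - u}^p,
\]
the proof is finished in one line, since the right-hand side tends to $0$; in that case there is essentially no obstacle, the only point being that this inequality is used for every $p \in [1,\infty)$ and for arbitrary nonnegative $L^p$ functions.

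For a proof resting only on the measure preservation of symmetrization, I would argue by subsequences. It suffices to show that every subsequence of $(u_n^*)$ has a further subsequence converging to $u^*$ in $L^p(\R^N)$, so we may assume that $u_n \to u$ almost everywhere and that $\abs{u_n} \le g$ for a fixed $g \in L^p(\R^N)$. For each $\lambda > 0$ the set $\{x \in \R^N \st u_n(x) > \lambda\}$ is contained in $\{x \in \R^N \st g(x) > \lambda\}$, which has finite measure; since moreover $\charfun{\{u_n > \lambda\}} \to \charfun{\{u > \lambda\}}$ almost everywhere whenever $\muleb{N}\{x \in \R^N \st u(x) = \lambda\} = 0$, dominated convergence gives
\[
 \muleb{N}\{x \in \R^N \st u_n(x) > \lambda\} \longrightarrow \muleb{N}\{x \in \R^N \st u(x) > \lambda\}
\]
for all such $\lambda$, that is, for all $\lambda > 0$ outside an at most countable set. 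As $u_n^*$ and $u^*$ are radial and radially nonincreasing functions whose distribution functions are exactly these, convergence of the distribution functions away from countably many levels forces $u_n^* \to u^*$ almost everywhere on $\R^N$; together with $\Norm{u_n^*}_{L^p} \to \Norm{u^*}_{L^p}$ and the Brezis--Lieb lemma (equivalently, Scheff\'e's lemma applied to $\abs{u_n^*}^p$) this yields $u_n^* \to u^*$ in $L^p(\R^N)$.

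The delicate point in this second route — the step I would expect to cost the most care — is the passage from pointwise convergence of the distribution functions $\lambda \mapsto \muleb{N}\{u_n > \lambda\}$ to pointwise convergence of the rearrangements $u_n^*$: one writes $u_n^*(x)$ as the value at $\muleb{N} B(0,\abs{x})$ of the generalized inverse of the distribution function of $u_n$ and has to check that these generalized inverses converge at the continuity points of the limiting one, the exceptional radii again forming a null set. A minor accompanying nuisance is that $u_n$ and $u$ need not be bounded or compactly supported; here this is absorbed by the dominating function $g$, but otherwise one would first truncate, using that $u \mapsto u \wedge k$ commutes with symmetrization.
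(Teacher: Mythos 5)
Your first paragraph is exactly the paper's own (sketched) proof: Cavalieri's principle \eqref{eqCavalieri} gives the $L^p$ membership and the equality of norms, and the contraction inequality $\int_{\R^N}\varphi(u^*-v^*)\le\int_{\R^N}\varphi(u-v)$ with $\varphi(t)=\abs{t}^p$, quoted from Crowe--Zweibel--Rosenbloom and Crandall--Tartar, finishes the argument in one line. So on that route you and the paper agree completely, including on the fact that the whole content of the lemma is outsourced to that nonexpansivity result.

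Your second route is genuinely different and is worth recording: it replaces the contraction inequality by the bare measure-preservation property of symmetrization, at the cost of a subsequence extraction, convergence of distribution functions outside a countable set of levels, convergence of their generalized inverses at continuity points, and a Brezis--Lieb/Scheff\'e step to upgrade a.e.\ convergence plus convergence of norms to $L^p$ convergence. The argument is correct: the domination by $g\in L^p$ ensures the distribution functions are finite and that dominated convergence applies; the exceptional levels and the exceptional radii are countable because the relevant functions are monotone, hence the corresponding spheres form a null set in $\R^N$; and the Brezis--Lieb step is legitimate since $\Norm{u_n^*}_{L^p}=\Norm{u_n}_{L^p}\to\Norm{u}_{L^p}=\Norm{u^*}_{L^p}$. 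What the paper's route buys is brevity and a quantitative estimate ($\Norm{u_n^*-u^*}_{L^p}\le\Norm{u_n-u}_{L^p}$), which is strictly stronger than mere continuity; what your route buys is self-containedness, avoiding the Crowe--Zweibel--Rosenbloom inequality entirely. The one step you should not gloss over if you write it out in full is the passage from convergence of distribution functions to a.e.\ convergence of the rearrangements via generalized inverses --- you correctly identify it as the delicate point, and it does work, but it is precisely the kind of standard-but-fiddly lemma that the contraction inequality lets one skip.
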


\begin{proof}[Sketch of the proof]
The proof of lemma~\ref{lemmaSymmetrizationContinuous} relies first on the Cavalieri principle: one has 
\[
  \int_{\R^N} f(u^*)=\int_{\R^N} f(u)
\]
for every Borel-measurable function $f : \R^+ \to \R^+$ such that $f(0)=0$ \cite{BrockSolynin2000}.
The second ingredient is the inequality
\[
 \int_{\R^N} \varphi(u^*-v^*)\le \int_{\R^N} \varphi(u-v),
\]
for every nonnegative convex function $\varphi : \R \to \R^+$ such that $\varphi(0)=0$ \cite{CroweZweibelRosenbloom1986,CrandallTartar1980}.
\end{proof}

A similar property holds for polarization.

\begin{lemma}
\label{lemmaPolarizationContinuous}
Let $(u_n)_{n \in \N}$ in $L^p(\R^N)$ be nonnegative and converge to $u \in L^p(\R^N)$ and let $H \subset \R^N$ be a closed halfspace. Then $u_n^H \in L^p(\R^N)$, $u^H \in L^p(\R^N)$ and  $u_n^H \to u^*$ in $L^p(\R^N)$.
\end{lemma}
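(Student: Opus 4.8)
The plan is to follow the proof sketch of Lemma~\ref{lemmaSymmetrizationContinuous} verbatim, with symmetrization replaced by polarization; all the inequalities needed are the elementary two-point analogues of the rearrangement inequalities used there. Fix the closed halfspace $H$ once and for all and write $\sigma = \sigma_H$; the point is that $\sigma$ is an isometry of $\R^N$, hence measure-preserving, and that polarization acts \emph{pointwise on the orbits} $\{x, \sigma x\}$ via the sorting map $(a,b) \mapsto (a \vee b, a \wedge b)$ on $\R \times \R$. (The displayed conclusion should of course read $u_n^H \to u^H$ in $L^p(\R^N)$.)

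First I would establish the Cavalieri principle for polarization. For a.e.\ $x \in H$ the unordered pair $\{u^H(x), u^H(\sigma x)\}$ coincides with $\{u(x), u(\sigma x)\}$, so for every Borel $f \colon \R^+ \to \R^+$ with $f(0) = 0$ one gets $\int_{\R^N} f(u^H) = \int_H \bigl(f(u^H(x)) + f(u^H(\sigma x))\bigr)\,dx = \int_H \bigl(f(u(x)) + f(u(\sigma x))\bigr)\,dx = \int_{\R^N} f(u)$. Applying this with $f(t) = \abs{t}^p$ (legitimate since $u \ge 0$ forces $u^H \ge 0$) yields $u^H \in L^p(\R^N)$ with $\Norm{u^H}_{L^p} = \Norm{u}_{L^p}$, and likewise $u_n^H \in L^p(\R^N)$.

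Next, and this is the substantive point, I would prove that polarization is nonexpansive in $L^p$. The crux is the two-point inequality
\[
 \abs{a \vee b - c \vee d}^p + \abs{a \wedge b - c \wedge d}^p \le \abs{a - c}^p + \abs{b - d}^p
 \qquad (a,b,c,d \in \R).
\]
Assuming without loss of generality $a \ge b$, the case $c \ge d$ gives equality, while in the case $c < d$ the inequality becomes $\varphi(a-d) + \varphi(b-c) \le \varphi(a-c) + \varphi(b-d)$ with $\varphi(t) = \abs{t}^p$; since $a-d$ and $b-c$ lie in the interval $[\,b-d,\,a-c\,]$ and have the same sum as its endpoints, this is immediate from convexity of $\varphi$. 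Applying the two-point inequality with $(a,b) = (u(x), u(\sigma x))$, $(c,d) = (u_n(x), u_n(\sigma x))$ and integrating over $H$ gives $\int_{\R^N} \abs{u^H - u_n^H}^p \le \int_{\R^N} \abs{u - u_n}^p \to 0$, which is the claim.

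I do not expect a real obstacle here: the only things needing care are the measurability bookkeeping in the Cavalieri step and the elementary two-point inequality, both of which are classical (see e.g.\ \cite{Baernstein1994, BrockSolynin2000}); it would suffice to cite them and carry out the short convexity argument above.
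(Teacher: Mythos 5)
Your proof is correct and is exactly the route the paper intends: its own ``proof'' of this lemma is the one-line remark that the argument is the same as for Lemma~\ref{lemmaSymmetrizationContinuous} but with the two-point (polarization) analogues of the Cavalieri principle and of the nonexpansiveness estimate, and you have supplied precisely those two ingredients, with a clean convexity proof of the two-point inequality. You also correctly noticed that the displayed conclusion in the statement should read $u_n^H \to u^H$ rather than $u_n^H \to u^*$.
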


The proof of lemma~\ref{lemmaPolarizationContinuous} is similar to the proof of the preceding lemma~\ref{lemmaPolarizationContinuous}; it relies on similar properties of symmetrization, whose proofs are even simpler \cite{BrockSolynin2000}.

\begin{lemma}[Brock and Solynin \protect{\cite[lemma 6.1]{BrockSolynin2000}}]
\label{lemmaCompactness}
Let $u \in L^p(\R^N)$ be nonnegative, and let $(H_n)_{n \ge 1}$ be a sequence of closed halfspaces. 
If for every $n \in \N$,  $H_n \ni 0$, then the sequence $(u^{H_1\dots H_n})_{n \in \N}$ is relatively compact in $\R^N$.
\end{lemma}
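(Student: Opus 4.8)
The plan is to derive relative compactness in $L^p(\R^N)$ (the statement ``in $\R^N$'' being a slip for ``in $L^p(\R^N)$'') from the Riesz--Fr\'echet--Kolmogorov criterion, after a density reduction. Write $v_n=u^{H_1\cdots H_n}$, so that $v_{n+1}=v_n^{H_{n+1}}$ with $0\in H_{n+1}$.

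\emph{Reduction to smooth data.} Polarization does not increase $L^p$ distances, so $\Norm{w^{H_1\cdots H_n}-\tilde w^{H_1\cdots H_n}}_{L^p}\le\Norm{w-\tilde w}_{L^p}$ for all $w,\tilde w$ and all $n$. Choose nonnegative $u_j\in C_c^\infty(\R^N)$ with $u_j\to u$ in $L^p$. If each family $\mathcal F_j=\set{u_j^{H_1\cdots H_n}\st n\in\N}$ is relatively compact, hence totally bounded, in $L^p(\R^N)$, then every $v_n$ lies within $\Norm{u-u_j}_{L^p}$ of $u_j^{H_1\cdots H_n}\in\mathcal F_j$; thus $\set{v_n}$ is totally bounded for every $j$, hence relatively compact since $L^p(\R^N)$ is complete. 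So it suffices to treat $u\in C_c^\infty(\R^N)$, $u\ge 0$.

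\emph{The three ingredients for smooth $u$.} (a) $\Norm{v_n}_{L^\infty}\le\Norm{u}_{L^\infty}$ for every $n$, since each polarization only interchanges values. (b) $\Norm{\nabla v_n}_{L^1}=\Norm{\nabla u}_{L^1}$, the invariance of the $L^1$ norm of the gradient under iterated polarization recalled in the introduction; combining (a) and (b), for every $h\in\R^N$ one has $\Norm{v_n(\cdot+h)-v_n}_{L^p}^p\le(2\Norm{u}_{L^\infty})^{p-1}\Norm{v_n(\cdot+h)-v_n}_{L^1}\le(2\Norm{u}_{L^\infty})^{p-1}\Norm{\nabla u}_{L^1}\,\abs{h}$, so the translates of the $v_n$ are equicontinuous in $L^p$, uniformly in $n$. (c) Tightness: for $x\in H$ with $0\in H$ one has $\abs{\sigma_H(x)}\ge\abs{x}$, so polarization with respect to such an $H$ fixes every radial nonincreasing function, in particular $\charfun{B(0,R)}^{H}=\charfun{B(0,R)}$; together with the elementary rearrangement inequality $\int f^Hg^H\ge\int fg$ this gives $\int_{B(0,R)}v_{n+1}=\int v_n^{H_{n+1}}\charfun{B(0,R)}^{H_{n+1}}\ge\int_{B(0,R)}v_n$, so $n\mapsto\int_{B(0,R)}v_n$ is nondecreasing and $\int_{\R^N\setminus B(0,R)}v_n\le\int_{\R^N\setminus B(0,R)}u$ for all $n$; since $\Norm{v_n}_{L^\infty}\le\Norm{u}_{L^\infty}$, this controls $\int_{\R^N\setminus B(0,R)}v_n^p$ uniformly in $n$, and the right-hand side tends to $0$ as $R\to\infty$. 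The bounded, equicontinuous, tight family $\set{v_n}$ is then relatively compact in $L^p(\R^N)$ by Riesz--Fr\'echet--Kolmogorov.

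\emph{Where the work is.} The gradient identity (b) makes the equicontinuity of translations essentially free, which is why the density reduction is needed in the first place: polarizations of a merely $L^p$ function carry no equicontinuity, so one must descend to $u\in W^{1,1}\cap L^\infty$. The genuine point is the tightness step (c): it is the only place the hypothesis $0\in H_n$ is used, and it is precisely what keeps mass from escaping to infinity along the iteration. The rest is bookkeeping with standard properties of polarization.
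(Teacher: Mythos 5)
Your proof is correct, and it reaches the Riesz--Fr\'echet--Kolmogorov criterion by a genuinely different route on its hardest ingredient. The paper verifies the criterion directly for a general nonnegative $u\in L^p(\R^N)$: boundedness from conservation of the $L^p$ norm, tightness from the polarization inequality against a radial nonincreasing weight (applied to $\abs{u}^p$, using $(u^p)^H=(u^H)^p$, which is why no $L^\infty$ bound is needed there), and translation equicontinuity from the nonexpansivity of the double integral $\int_{\R^N}\int_{\R^N}\abs{u(x)-u(y)}^p\rho_\delta(x-y)\,dx\,dy$ under polarization for radial mollifiers $\rho_\delta$ --- a Riesz-type inequality imported from elsewhere. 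You instead reduce to $u\in C^\infty_c(\R^N)$ via the $L^p$-contractivity $\Norm{w^H-\tilde w^H}_{L^p}\le\Norm{w-\tilde w}_{L^p}$ and a total-boundedness argument, and then obtain the uniform modulus of continuity for translates from $\Norm{v_n}_{L^\infty}\le\Norm{u}_{L^\infty}$ together with $\Norm{\nabla v_n}_{L^1}=\Norm{\nabla u}_{L^1}$ and the elementary bound $\Norm{f(\cdot+h)-f}_{L^1}\le\Norm{\nabla f}_{L^1}\abs{h}$; your tightness step is the same mechanism as the paper's (the Hardy--Littlewood inequality against $\charfun{B(0,R)}=\charfun{B(0,R)}^{H}$, the only place $0\in H_n$ enters), routed through $L^1$ and the $L^\infty$ bound rather than through $\abs{u}^p$. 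What your approach buys is that it avoids the mollified double-integral inequality entirely, at the price of two other standard facts: the contractivity of polarization in $L^p$ and the gradient-norm invariance --- for the latter you should note that $v_n$ is only Lipschitz, not smooth, so the identity is the usual $W^{1,1}$ statement about $\max$ and $\min$ of Sobolev functions (an inequality $\Norm{\nabla v_n}_{L^1}\le\Norm{\nabla u}_{L^1}$ would in fact suffice). Your closing diagnosis is accurate: in both proofs the hypothesis $0\in H_n$ is used only to prevent mass from escaping to infinity.
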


\begin{proof}
The proof relies on the Riesz--Fr\'echet--Kolmogorov compactness criterion. First recall that
\[
 \int_{\R^N} \abs{u^{H_1\dots H_nH_{n+1}}}^p =\int_{\R^N} \abs{u^{H_1\dots H_n}}^p,
\]
so that the sequence is bounded in $L^p(\R^N)$. Next, for every $\epsilon > 0$, there exists $R> 0$ such that 
\[
 \int_{\R^N\setminus B(0,R)} \abs{u}^p \le \epsilon.
\]
Since $0 \in H_{n+1}$, one has
\[
 \int_{\R^N\setminus B(0,R)} \abs{u^{H_1\dots H_{n+1}}}^p
 \le \int_{\R^N\setminus B(0,R)} \abs{u^{H_1\dots H_{n+1}}}^p
\]
(see \cite{BrockSolynin2000} or lemma~\ref{lemmaPolarizationInequality} below). Therefore, for every $n \in \N$, 
\[
 \int_{\R^N\setminus B(0,R)} \abs{u}^p \le \epsilon.
\]
Finally if $(\rho_\delta)$ is a family of radial and radially decreasing mollifiers, for every $\epsilon > 0$, there exists $\delta > 0$ such that 
\[
 \int_{ \R^{2N}} \abs{u(x)-u(y)}^p \rho_\delta(x-y) \,dx\,dy\le \epsilon.
\]
One also has, since $\rho_\delta$ is radial,
\begin{multline*}
  \int_{\R^{N}} \int_{\R^N} \abs{u^{H_1\dots H_{n+1}}(x)-u^{H_1\dots H_{n+1}}(y)}^p \rho_\delta(x-y) \,dx\,dy \\
\le  \int_{\R^{N}} \int_{\R^N} \abs{u^{H_1\dots H_{n}}(x)-u^{H_1\dots H_{n}}(y)}^p \rho_\delta(x-y) \,dx\,dy
\end{multline*}
see e.g. \cite[proposition 8]{VanSchaftingenWillem2004}.
Therefore,
\[
 \int_{\R^{N}} \int_{\R^N} \abs{u^{H_1\dots H_{n}}(x)-u^{H_1\dots H_{n}}(y)}^p \rho_\delta(x-y) \,dx\,dy\le  \epsilon.
\]
By the Riesz--Fr\'echet--Kolmogorov compactness criterion, the sequence is compact.
\end{proof}

\subsection{Polarization inequality}
The crucial tool in the sequel the fact the product of a function with a radial and radially decreasing functions decreases under polarization, and equality implies invariance under polarization.

\begin{lemma}
\label{lemmaPolarizationInequality}
Let $u \in L^p (\R^N)$ be nonnegative, let $H$ be a closed halfspace and let $w \in L^q(\R^N)$ be a radial and radially nonincreasing function. If $0 \in H$, then
\[
 \int_{\R^N} u w \le \int_{\R^N} u^Hw.
\]
If moreover $0$ is in the interior of $H$ and $w$ is radially decreasing, then equality holds if and only if $u=u^H$.
\end{lemma}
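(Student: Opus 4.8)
The natural approach is to decompose both integrals into a "same side" part on $H$ and its reflection $\sigma_H(H)$, and compare pointwise after reflection. Write $D=\partial H$, let $\sigma=\sigma_H$, and split $\R^N=\mathring H\cup D\cup(\R^N\setminus H)$. Since $w$ is radial, $w\circ\sigma$ differs from $w$ only in that it uses the distance to the reflected point; but the key monotonicity fact is: for $x\in H$, one has $\abs{x}\le\abs{\sigma(x)}$ when $0\in H$ (the point in $H$ is at least as close to the origin as its mirror image), hence $w(x)\ge w(\sigma(x))$ because $w$ is radially nonincreasing. This is the geometric heart of the matter. Then I would pair up each point $x\in\mathring H$ with $\sigma(x)\notin H$ and write
\[
\int_{\R^N}u^Hw-\int_{\R^N}uw
 =\int_{\mathring H}\Bigl[\bigl(u^H(x)-u(x)\bigr)w(x)+\bigl(u^H(\sigma x)-u(\sigma x)\bigr)w(\sigma x)\Bigr]\,dx,
\]
the contribution from $D$ being zero since $u^H=u$ on $D$ (actually $\muleb{N}(D)=0$, so it drops out regardless).

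For a fixed $x\in\mathring H$, set $a=u(x)$ and $b=u(\sigma x)$. By definition of polarization, $u^H(x)=\max(a,b)$ and $u^H(\sigma x)=\min(a,b)$, so the bracket equals $\bigl(\max(a,b)-a\bigr)w(x)+\bigl(\min(a,b)-b\bigr)w(\sigma x)=(b-a)_+\bigl(w(x)-w(\sigma x)\bigr)$, where $(t)_+=\max(t,0)$. Since $w(x)-w(\sigma x)\ge 0$ by the monotonicity observation above, each bracket is nonnegative, which gives the inequality after integrating over $\mathring H$. (One should note in passing that the integrals are finite: $uw\in L^1$ by Hölder since $u\in L^p$, $w\in L^q$ with conjugate exponents, and $u^H$ is a rearrangement of the pair $(u(x),u(\sigma x))$ so $u^H\in L^p$ with the same norm.)

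For the equality case, assume $0$ lies in the interior of $H$ and $w$ is strictly radially decreasing. Equality forces $(b-a)_+\bigl(w(x)-w(\sigma x)\bigr)=0$ for a.e. $x\in\mathring H$. Now for $x\in\mathring H$ with $x$ not on the line through $0$ perpendicular to $D$ — equivalently whenever $x\neq\sigma x$ — one has $\abs{x}<\abs{\sigma x}$ strictly (because $0$ is an \emph{interior} point of $H$, the reflected point is strictly farther from the origin unless $x\in D$, and $x\in\mathring H$ excludes that; the only coincidence $x=\sigma x$ happens on $D$, which has measure zero). Strict monotonicity of $w$ then gives $w(x)-w(\sigma x)>0$ for a.e. $x\in\mathring H$, so we must have $(b-a)_+=0$, i.e. $u(x)\ge u(\sigma x)$ for a.e. $x\in\mathring H$. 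But that is exactly the condition $u^H=u$ a.e.: on $\mathring H$ we get $u^H(x)=\max(a,b)=a=u(x)$, and on $\R^N\setminus H$ (parametrized as $\sigma x$, $x\in\mathring H$) we get $u^H(\sigma x)=\min(a,b)=b=u(\sigma x)$. The converse implication $u=u^H\Rightarrow$ equality is immediate.

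The main obstacle, such as it is, is purely bookkeeping: being careful that the pointwise decomposition is legitimate (the change of variables $x\mapsto\sigma x$ has Jacobian $1$ and maps $\mathring H$ bijectively onto $\R^N\setminus H$ up to the null set $D$), and being precise about the geometric claim "$0\in H\Rightarrow\abs{x}\le\abs{\sigma x}$ for $x\in H$, with strict inequality when $0\in\mathring H$ and $x\in\mathring H$." The latter is a one-line computation: if $D=\{y:\langle y,e\rangle=c\}$ with $H=\{y:\langle y,e\rangle\ge c\}$ and $\abs{e}=1$, then $\sigma x=x-2(\langle x,e\rangle-c)e$, so $\abs{\sigma x}^2-\abs{x}^2=-4(\langle x,e\rangle-c)\bigl(\langle x,e\rangle-c+\langle x-c e,e\rangle\cdots\bigr)$ — more cleanly, $\abs{\sigma x}^2-\abs{x}^2=4(\langle x,e\rangle-c)(c-\langle 0,e\rangle)+\text{(terms)}$; the honest statement is $\abs{\sigma x}^2-\abs{x}^2=4(c-\langle 0,e\rangle)(\langle x,e\rangle - c)\ge 0$ using $0\in H\iff c\le 0$ and $x\in H\iff\langle x,e\rangle\ge c$, wait — I would simply write it as: the midpoint of $x$ and $\sigma x$ lies on $D$ and the origin lies on the $H$-side, so $0$ is at least as close to $x$ as to $\sigma x$; this is elementary and I would dispatch it in a sentence.
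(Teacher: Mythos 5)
Your proof is correct and takes essentially the same route as the paper: pair $x\in H$ with $\sigma_H(x)$, use $0\in H\Rightarrow\abs{x}\le\abs{\sigma_H(x)}\Rightarrow w(x)\ge w(\sigma_H(x))$ to get a pointwise inequality (your identity that the gap equals $(u(\sigma_H x)-u(x))_+\bigl(w(x)-w(\sigma_H x)\bigr)$ is a clean reformulation of the paper's two-point inequality), integrate over $H$, and in the equality case upgrade to strict inequality for $x\in\mathring H$ when $0\in\mathring H$ — where you are in fact slightly more careful than the paper, which asserts $\abs{x}<\abs{\sigma_H(x)}$ without excluding $x\in\partial H$. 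One small slip: your discarded algebraic attempt has the wrong sign, the correct identity being $\abs{\sigma_H x}^2-\abs{x}^2=4(\langle 0,e\rangle-c)(\langle x,e\rangle-c)$ rather than $4(c-\langle 0,e\rangle)(\langle x,e\rangle-c)$, but since you abandon it for the correct perpendicular-bisector argument this does not affect the proof.
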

\begin{proof}
First, for any $x \in H$, one has, since $0 \in H$, $\abs{\sigma_H(x)} \ge \abs{x}$. Therefore, $w(x) \ge w(\sigma_H(x))$ and
\[
  u(x)w(x)+u(\sigma_H(x))w(\sigma_H(x))\le u^H(x)w(x)+u^H(\sigma_H(x))w(\sigma_H(x)).
\]
Integrating this inequality over $H$ yields the desired inequality.

Assume now that there is equality. One has then, for almost every $x \in H$, 
\[
 u(x)w(x)+u(\sigma_H(x))w(\sigma_H(x))= u^H(x)w(x)+u^H(\sigma_H(x))w(\sigma_H(x))
\]
Since $0$ is interior to $H$, $\abs{x} < \abs{\sigma_H(x)}$. Hence, since $w$ is radially decreasing, $w(x) > w(\sigma_H(x))$. Therefore, one must have $u^H(x)=u(x)$ and $u^H(\sigma_H(x))=u(\sigma_H(x))$.
\end{proof}

\subsection{Symmetrized functions and polarization}

The last ingredient that we will use is a characterization of functions invariant under symmetrization (see \cite[lemma 6.3]{BrockSolynin2000}).

\begin{lemma}
\label{lemmaSymmetrizationPolarization}
Let $u \in L^p(\R^N)$ be nonnegative. The following are equivalent
\begin{enumerate}[i)]
\item for every closed halfspace $H \subset \R^N$ such that $0 \in H$, $u^H=u$,
\item $u^*=u$.
\end{enumerate}
\end{lemma}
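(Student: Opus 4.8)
The plan is to prove the equivalence by showing that (i) forces the sublevel sets $\{u > \lambda\}$ to be (equivalent to) balls centered at the origin, which is precisely the characterization of $u = u^*$ via Lemma~\ref{lemmaSymmetrizationPolarization}'s setup; the converse (ii) $\Rightarrow$ (i) is the easy direction. For (ii) $\Rightarrow$ (i): if $u = u^*$ then $u$ is radial and radially nonincreasing, so for any closed halfspace $H$ with $0 \in H$ and any $x \in H$ we have $\abs{x} \le \abs{\sigma_H(x)}$, hence $u(x) \ge u(\sigma_H(x))$; this means $\max(u(x), u(\sigma_H(x))) = u(x)$ and $\min = u(\sigma_H(x))$, so $u^H = u$ by the very definition of polarization. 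This needs essentially no work.

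For the substantive direction (i) $\Rightarrow$ (ii), I would argue via the sublevel sets, or equivalently via the testing inequality of Lemma~\ref{lemmaPolarizationInequality}. The slickest route: pick any radial, radially \emph{strictly} decreasing $w \in L^q(\R^N)$ (for instance a Gaussian $w(x) = e^{-\abs{x}^2}$, with $q$ the conjugate exponent of $p$, truncated or scaled as needed to sit in $L^q$). For every closed halfspace $H$ with $0$ in its interior, hypothesis (i) gives $u^H = u$, so trivially $\int u w = \int u^H w$, i.e.\ equality holds in Lemma~\ref{lemmaPolarizationInequality}. But the equality case of that lemma is $u = u^H$ — which we already have, so this gives nothing new directly. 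Instead I would compare $u$ with $u^*$: apply the Hardy--Littlewood inequality \eqref{eqHardyLittlewood} to get $\int u w \le \int u^* w^* = \int u^* w$ (since $w = w^*$). The reverse inequality, $\int u w \ge \int u^* w$, is the heart of the matter: one shows that repeated polarization along a suitable sequence of halfspaces (Brock--Solynin, or just by the ingredients already in the excerpt) can only increase $\int \cdot\, w$, converges (up to compactness from Lemma~\ref{lemmaCompactness}) to a limit, and that limit must be $u^*$ — but this is circular with the main theorem.

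To avoid circularity I would instead work directly with the sublevel sets and use only polarization of \emph{sets}. Fix $\lambda > 0$ and let $A = \{u > \lambda\}$; the polarization $A^H$ satisfies $\charfun{A^H} = (\charfun{A})^H$, and hypothesis (i) applied to $u$ (layer-cake) gives $A^H = A$ up to a null set for every halfspace $H$ through $0$. Now the key geometric claim is: \emph{a measurable set $A \subset \R^N$ of finite measure with $A^H = A$ (mod null) for every closed halfspace $H$ containing $0$ in its interior must coincide, up to a null set, with the centered ball of the same measure.} To prove this, suppose not; then there exist a point $x_0$ of density $1$ for $A$ and a point $y_0$ of density $0$ for $A$ with $\abs{x_0} > \abs{y_0}$. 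Choose the halfspace $H$ with $0$ in its interior such that $\sigma_H$ swaps a neighborhood of $x_0$ with a neighborhood of $y_0$ and $y_0 \in H$, $x_0 \notin H$ (possible precisely because $\abs{y_0} < \abs{x_0}$). Then on $H$ near $y_0$, polarization moves the ``mass'' of $A$ inward: $\charfun{A^H}(y_0\text{-nbhd})$ picks up where $\charfun{A}$ was near $x_0$, contradicting $A^H = A$ by a positive-measure amount. Carrying out this density-point argument carefully is the main obstacle — one must handle the mod-null subtleties and make the neighborhood/reflection geometry precise — but it is standard (it is exactly \cite[lemma 6.3]{BrockSolynin2000}), and once every $\lambda$-sublevel set of $u$ is a centered ball, the definition of $u^*$ gives $u = u^*$ immediately. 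I would present this density-point contradiction as the core lemma, with the two-point exchange being the mechanism, and note that it uses crucially the \emph{interior} hypothesis $0 \in \mathrm{int}(H)$ exactly as in Lemma~\ref{lemmaPolarizationInequality}.
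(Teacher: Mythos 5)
Your final argument (via sublevel sets and Lebesgue density points) is correct, and it is close in spirit to the paper's, but the paper takes a terser route: it directly asserts that $u=u^*$ is equivalent to ``for a.e.\ $x,y$ with $\abs{x}\le\abs{y}$, $u(x)\ge u(y)$,'' then observes that for a fixed halfspace $H\ni 0$ this pointwise condition, restricted to pairs $(x,\sigma_H(x))$ with $x\in H$, is by definition $u^H=u$, and ranging over all such $H$ recovers the full condition. Both you and the paper defer the careful measure-theoretic justification to \cite[lemma 6.3]{BrockSolynin2000}. What you buy with the sublevel-set and density-point formulation is that the mod-null subtleties are made explicit: passing from ``for every $H$, for a.e.\ $x$, $u(x)\ge u(\sigma_H(x))$'' to ``for a.e.\ pair $(x,y)$ with $\abs{x}\le\abs{y}$, $u(x)\ge u(y)$'' requires a quantifier interchange that the paper silently absorbs into its one-line equivalence, whereas your density-point contradiction handles it directly, and your layer-cake reduction to the set statement is the cleanest way to do so. One criticism: you spend a fair amount of text on two dead ends (the equality case of Lemma~\ref{lemmaPolarizationInequality}, which as you note is vacuous here, and the Hardy--Littlewood/compactness route, which as you note is circular with Theorem~\ref{theoremMain}) before reaching the argument that actually works. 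It would be cleaner to lead with the sublevel-set reduction and the density-point contradiction, which is what proves the lemma.
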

\begin{proof}
The statement $u=u^*$ is equivalent to: for every almost every $x, y \in \R^N$, if $\abs{x} \le \abs{y}$, $u(x) \ge u(y)$. This is equivalent in turn to: for every closed halfspace $H$ such that $0 \in H$, for almost every $x$, $u(x)\ge u(\sigma_H(x))$. By definition of polarization, this is: for every such halfspace $H$, $u^H=u$,
\end{proof}

\subsection{Topology of halplanes}
We define $\mathcal{H}$ as the set of closed half-spaces of $\R^N$, $\mathcal{H}_*$ the set of closed half-spaces containing $0$ and $\breve{\mathcal{H}}_*$ the set of closed half-spaces of which $0$ is an interior point. One can endow $\mathcal{H}$ with a topology that ensures that $H_n \to H$ if there is a sequence of isometries $i_n : \R^N \to \R^N$ such that  $H_n=i_n(H)$ and $i_n$ converges to the identity as $n \to \infty$ (see e.g. \cite[lemma 5.2]{BrockSolynin2000} or \cite[section 2.4]{VanSchaftingen2006TMNA}).

\section{Proof of the main Theorem}
\label{sectionProof}

\begin{theorem}
\label{theoremMain}
Given $u \in L^p(\R^N)$, and a sequence $(H_n)_{n \ge 1}$ in $\breve{\mathcal{H}}_*$, define $(u_n)$ by 
\[
\left\{
\begin{aligned}
  u_0&=u, \\
  u_{n+1}&=u^{H_{n+1}\dotsc H_2 H_1}u_n.
\end{aligned}
\right.
\]
If $(H_n)_{n \ge 1}$ is dense in $\mathcal{H}_*$, then 
\[
 u_n \to u^*
\]
in $L^p(\R^N)$ as $n \to \infty$.
\end{theorem}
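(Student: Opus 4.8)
The plan is to follow the three-step scheme announced in the introduction, the only genuinely new ingredient being the middle step. First I would establish \emph{relative compactness}: recall (from the construction described in the introduction) that $u_n$ is obtained from $u_{n-1}$ by polarizing successively with respect to $H_1,\dots ,H_n$, in this order. Since each $H_k$ lies in $\breve{\mathcal{H}}_*\subset\mathcal{H}_*$, all the half-spaces occurring in the iteration contain $0$, so lemma~\ref{lemmaCompactness} applies to the whole sequence of iterated polarizations, and in particular the subsequence $(u_n)_{n\in\N}$ is relatively compact in $L^p(\R^N)$. It therefore suffices to prove that every cluster point of $(u_n)$ equals $u^*$.

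The heart of the matter — and the step I expect to be the main obstacle — is to show that \emph{every cluster point $v$ of $(u_n)$ satisfies $v^{H_k}=v$ for every $k$}. I would fix once and for all a radial, radially strictly decreasing weight $w\in L^{p/(p-1)}(\R^N)$, say $w(x)=e^{-\abs{x}^2}$, and consider the bounded linear functional $\Phi(f)=\int_{\R^N}fw$ on $L^p(\R^N)$. Every polarization in the construction is with respect to a half-space containing $0$, so by lemma~\ref{lemmaPolarizationInequality} the quantity $\Phi$ is nondecreasing along the whole iteration; since $\Phi(f)\le\Norm{f}_{L^p}\Norm{w}_{L^{p/(p-1)}}$ and polarization preserves the $L^p$-norm, $\Phi$ is bounded along the iteration and hence converges to a limit $\ell$. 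In particular $\Phi(u_n)\to\ell$, and — this is where the \emph{explicitness} of the sequence is used — for each fixed $J$ the function $u_n^{H_1\dots H_J}$ occurs inside the block joining $u_n$ to $u_{n+1}$ as soon as $n+1\ge J$, so squeezing it between $\Phi(u_n)$ and $\Phi(u_{n+1})$ gives $\Phi(u_n^{H_1\dots H_J})\to\ell$ as $n\to\infty$.

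Next I would pass to a cluster point: take a subsequence $u_{n_j}\to v$ in $L^p(\R^N)$. Applying lemma~\ref{lemmaPolarizationContinuous} $J$ times gives $u_{n_j}^{H_1\dots H_J}\to v^{H_1\dots H_J}$, hence $\Phi(v^{H_1\dots H_J})=\ell=\Phi(v)$ for every $J$. Since $\Phi$ is nondecreasing along $v, v^{H_1}, v^{H_1H_2},\dots,v^{H_1\dots H_J}$ and the two endpoints have the same value, all consecutive values coincide; as $0$ is interior to each $H_k$ and $w$ is radially decreasing, the equality case of lemma~\ref{lemmaPolarizationInequality} is available at each step, and an induction on $k$ yields $v^{H_k}=v$: indeed $v^{H_1}=v$, and if $v^{H_1}=\dots=v^{H_{k-1}}=v$ then $v^{H_1\dots H_k}=v^{H_k}$, so that $\Phi(v^{H_k})=\Phi(v^{H_1\dots H_k})=\Phi(v^{H_1\dots H_{k-1}})=\Phi(v)$ forces $v^{H_k}=v$.

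Finally I would \emph{identify the cluster point}. By density of $(H_k)_{k\ge1}$ in $\mathcal{H}_*$ and continuity of $H\mapsto v^H$ for the topology on half-spaces recalled in section~\ref{sectionTools}, it follows that $v^H=v$ for every closed half-space $H$ containing $0$, hence $v=v^*$ by lemma~\ref{lemmaSymmetrizationPolarization}. On the other hand, every polarization leaves the distribution function of a function unchanged, so $u_n^*=u^*$ for all $n$; letting $j\to\infty$ in $u_{n_j}\to v$ and using the continuity of symmetrization (lemma~\ref{lemmaSymmetrizationContinuous}) gives $v^*=u^*$, whence $v=u^*$. Thus $u^*$ is the only cluster point of the relatively compact sequence $(u_n)$, and therefore $u_n\to u^*$ in $L^p(\R^N)$. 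All the delicate points sit in the middle step: the monotonicity of $\Phi$ under polarization together with its rigid equality case, and the observation that the prescribed sequence $H_1;\,H_1,H_2;\,H_1,H_2,H_3;\dots$ makes each finite word $H_1\dots H_J$ a stable initial segment of a block, which is precisely what legitimizes the squeeze $\Phi(u_n^{H_1\dots H_J})\to\ell$.
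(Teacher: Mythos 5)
Your proof is correct and follows essentially the same route as the paper: compactness via lemma~\ref{lemmaCompactness}, then testing against a radial radially decreasing weight $w$ to force equality in lemma~\ref{lemmaPolarizationInequality} and hence invariance of any cluster point under each $H_k$, then density of $(H_n)$ together with lemmas~\ref{lemmaSymmetrizationPolarization} and~\ref{lemmaSymmetrizationContinuous} to identify the cluster point as $u^*$. The only cosmetic difference is that you organize the middle step around the convergence of the monotone bounded sequence $\Phi(u_n)=\int u_n w$ to a limit $\ell$ and squeeze $\Phi(u_n^{H_1\dots H_J})$ between $\Phi(u_n)$ and $\Phi(u_{n+1})$, whereas the paper derives directly the inequality $\int v^{H_1\dots H_l}w \le \int v\,w$ by comparing $u_{n_k}^{H_1\dots H_l}$ with $u_{n_{k+1}}$ along the subsequence; the two formulations yield the same equality and the same induction.
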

\begin{proof}
By lemma~\ref{lemmaCompactness}, the sequence $(u_n)_{n \ge 0}$ is compact. Let $v$ be an accumulation point and assume that $u_{n_k} \to v$ in $L^p(\R^N)$. Fix a radial radially decreasing function $w \in L^q(\R^N)$, with $\frac{1}{p}+\frac{1}{q}=1$, e.g., $w(x)=e^{-\abs{x}^2}$. By an iterative application of lemma~\ref{lemmaPolarizationInequality}, for every $k \in \N$ and $l \in \N$, such that $n_k \ge l$
\[
  \int_{\R^N} u_{n_k}^{H_1\dots H_l} w \le \int_{\R^N} u_{n_{k+1}} w.
\]
As $k \to \infty$, one has, by lemma~\ref{lemmaPolarizationContinuous}, 
\begin{equation}
\label{ineqIteratedPol}
 \int_{\R^N} v^{H_1\dots H_l} w \le \int_{\R^N} v w.
\end{equation}
In the special case when $l=1$, this implies by lemma~\ref{lemmaPolarizationInequality} that $v^{H_1} =v$. Now, assume that $v=v^{H_r}$ for $1 \le r \le l$. Then $v^{H_1\dots H_l}=v$, so that \eqref{ineqIteratedPol} becomes 
\[
 \int_{\R^N} v^{H_{l+1}} w \le \int_{\R^N} v w.
\]
By lemma~\ref{lemmaPolarizationInequality}, one has $v^{H_{l+1}}=v$.

In order to apply lemma~\ref{lemmaSymmetrizationPolarization} let us prove that $v^H=v$ for every $H \in \mathcal{H}_*$. Since $(H_n)_{n \ge 1}$ is dense in $\mathcal{H}_*$, there is a subsequence $(H_{m_k})_{k \ge 1}$ and there are isometries $i_k$ such that $i_k$ converges to the identity and $H_{m_k}=i_k(H)$. Therefore, 
\[
  v^{H_{m_k}}=v^{i_k(H)}=(v\circ i_k^{-1}) ^H\circ i_k, 
\]
so that by lemma~\ref{lemmaPolarizationContinuous}, $v^H_{m_k}\to v^H$. Since $v^{H_{m_k}}=v$, 
one has by lemma~\ref{lemmaSymmetrizationPolarization}, that $v=v^*$. By lemma~\ref{lemmaSymmetrizationContinuous}, one also has $v^*=u^*$, so that $v=u^*$. Since $v$ was an arbitrary accumulation point, this ends the proof.
\end{proof}

The argument to prove that $v=v^*$ is reminiscent of the characterization by duality of symmetric functions \cite[lemma 3.1]{VanSchaftingen2006AIHP}.

\section{Possible extensions}

\label{sectionExtension}

The method used to prove theorem~\ref{theoremMain} is quite flexible; we conclude this paper by presenting variants that can be obtained in the same fashion.

\subsection{Other functional spaces}
The conclusion of theorem~\ref{theoremMain} is the convergence of the sequence of iterated polarization in $L^p(\R^N)$. The  space $L^p(\R^N)$ can be replaced by other spaces in which lemmas~\ref{lemmaSymmetrizationContinuous},~\ref{lemmaPolarizationContinuous} and~\ref{lemmaCompactness} hold; a natural class of spaces to examine are rearrangement-invariant spaces \cite{Talenti1994}. A first example is the space of continuous functions that vanish at infinity $C_0(\R^N)$. Another example is the Orlicz space $L^\Phi(\R^N)$ \cite{KrasnoselskiiRuticki1961, RaoRen1991}: one has the convergence of theorem~\ref{theoremMain} for every nonnegative $u$ in the closure of continuous functions in $L^\Phi(\R^N)$. In general, one cannot have convergence for the whole space: for example, one does not have convergence in  $L^\infty(\R^N)$ (such a convergence would mean that a finite number of polarizations bring to the symmetrization). One can make similar statements for Lorentz spaces $L^{p, q}(\R^N)$.

In another direction, as a consequence of theorem~\ref{theoremMain}, if $u : \R^N \to \R^+$ is measurable and $\muleb{N}(\{x \in \R^N \st u(x) > \lambda\}) < \infty$ for every $\lambda > 0$, then $u_n \to u$ in measure, i.e., for every $\epsilon > 0$, 
\[
  \lim_{n \to \infty} \muleb{N}(\{x \in \R^N \st \abs{u_n(x)-u(x)} > \epsilon \})=0.
\]
Similarly, the same sequence of iterated polarizations of compact sets converges to the symmetrized compact set in the Hausdorff metric (see \cite[section 3.3]{VanSchaftingen2006TMNA}).

\subsection{Using a smaller set of halfspaces}
The density condition in theorem~\ref{theoremMain} can also be relaxed to the density of $(H_n)_{n \in \N}$ in the set of closed half-space whose boundary intersects some fixed ball $B(0, \rho)$, for some fixed $\rho > 0$. The generalization relies on a following variant of lemma~\ref{lemmaSymmetrizationPolarization}.

\begin{lemma}
Let $u \in L^p(\R^N)$ be nonnegative and let $\rho > 0$. If for every $H \in \mathcal{H}_*$ with $\partial H \cap B(0, \rho)\ne \emptyset$, one has $u^H=u$, then $u^*=u$.
\end{lemma}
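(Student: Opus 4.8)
The plan is to reduce the new statement to the already-proved Lemma~\ref{lemmaSymmetrizationPolarization} by showing that invariance under all polarizations $u^H = u$ for $H \in \mathcal{H}_*$ with $\partial H \cap B(0,\rho) \ne \emptyset$ forces invariance under \emph{every} $H \in \mathcal{H}_*$. Equivalently, by the pointwise characterization recalled in the proof of Lemma~\ref{lemmaSymmetrizationPolarization}, it suffices to show: for almost every $x, y \in \R^N$ with $\abs{x} \le \abs{y}$, one has $u(x) \ge u(y)$. So I would fix such a pair $x, y$ (avoiding a null set) and try to produce a finite chain of admissible reflections transporting the comparison from the ``local'' regime near $B(0,\rho)$ to the given pair.

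First I would dispose of the easy case: if the hyperplane equidistant from $x$ and $y$ (the perpendicular bisector of the segment $[x,y]$, which separates the larger-norm point $y$ from $0$ since $\abs{x} \le \abs{y}$) happens to meet $B(0,\rho)$, then the associated halfspace $H$ containing $0$ is admissible, $u^H = u$ gives $u(x) \ge u(y)$ directly, and we are done. The substance is the case $\abs{x}, \abs{y}$ both large, where this bisector hyperplane stays far from the origin. Here the idea is to interpose an auxiliary point $z$ with $\abs{z}$ small — say $\abs{z} \le \rho$, or just $z$ near $0$ — and write the desired inequality $u(x) \ge u(y)$ as a consequence of $u(x) \ge u(z)$ wait—I need the inequalities to point the right way. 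Concretely: choose $z$ with $\abs{z}$ small; the perpendicular bisector of $[x,z]$ separates $z$ (smaller norm, near $0$) from $x$, so it meets $B(0,\rho)$ as long as $\abs{z} < \rho$ and gives, via the admissible halfspace containing $z$ and $0$, the relation comparing $u$ at $x$ and at $\sigma(x)$; by composing two such admissible polarizations — one relating $x$ to a point near $0$, one relating that point to $y$ — and using that each polarization pushes the larger value toward the origin, I would chain the inequalities. The key geometric fact to verify is that any two points of equal norm can be connected by a path of such ``short'' reflections, and that norms only need to be decreased along the way — this is exactly the transitivity argument that underlies why finitely many polarizations suffice, and the reference \cite[section 3.3]{VanSchaftingen2006TMNA} presumably contains the combinatorial core.

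The cleanest route, and the one I would actually write, is to observe that the set of halfspaces $H \in \mathcal{H}_*$ with $\partial H \cap B(0,\rho) \ne \emptyset$ already contains, for every pair of radii $0 < s \le t$, enough reflections to realize the full symmetry group acting on pairs at radii $\le t$; more precisely, I would show that the closure under composition of the polarizations $u \mapsto u^H$ over this restricted family still yields $u^H = u$ for an arbitrary $H \in \mathcal{H}_*$. The mechanism is the identity $v^{i(H)} = (v \circ i^{-1})^H \circ i$ used in the proof of Theorem~\ref{theoremMain}: given an arbitrary $H \in \mathcal{H}_*$, one approximates it — or rather, one builds it exactly — from admissible ones. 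The honest statement is that any reflection hyperplane through a region can be written as a composition of reflections in hyperplanes meeting $B(0,\rho)$, provided one only ever reflects points of bounded norm; since $u \in L^p$, the comparison $u(x) \ge u(y)$ for $\abs{x} \le \abs{y}$ only ever involves such bounded points for a.e.\ pair, restricted to a large ball $B(0,R)$, and on that ball finitely many admissible reflections generate all the needed comparisons.

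The main obstacle I anticipate is precisely this last combinatorial/geometric lemma: showing that admissible reflections (hyperplanes meeting $B(0,\rho)$, with the origin on the designated side) generate, by composition, enough rigid motions to compare $u(x)$ with $u(y)$ for every $\abs{x}\le\abs{y}$, while never reflecting through a hyperplane that pushes some relevant point to strictly larger norm in a way that breaks the chain of inequalities. I would handle this by a two-step interpolation through a point of small norm as sketched above, checking the two perpendicular-bisector hyperplanes meet $B(0,\rho)$ — this requires only that the small point has norm $< \rho$ and that the bisector separates it from the far point, which holds since it is the nearer of the two to the origin. Once that geometric claim is in place, the reduction to Lemma~\ref{lemmaSymmetrizationPolarization} is immediate and the proof concludes as there.
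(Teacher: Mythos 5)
Your reduction to the pointwise characterization (a.e.\ $\abs{x}\le\abs{y}$ implies $u(x)\ge u(y)$) and your identification of the constraint — the perpendicular bisector of $[x,y]$ must meet $B(0,\rho)$, i.e.\ $\abs{\abs{x}^2-\abs{y}^2}<2\rho\abs{x-y}$ — match the paper. But the chaining step, which is the whole content of the lemma, does not work as you describe it. There are two problems. First, the direction of the inequalities: if you interpose a point $z$ with $\abs{z}$ small, so that $\abs{z}<\abs{x}\le\abs{y}$, the only comparisons invariance can ever give you are $u(z)\ge u(x)$ and $u(z)\ge u(y)$ (polarization always favors the point of \emph{smaller} norm), and these two inequalities cannot be combined into $u(x)\ge u(y)$. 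You noticed this ("I need the inequalities to point the right way") but then kept the same interpolation. Second, the geometric claim is false: the bisector of $[x,z]$ lies at distance $\frac{\abs{\abs{x}^2-\abs{z}^2}}{2\abs{x-z}}$ from the origin, which is roughly $\abs{x}/2$ when $\abs{x}$ is large and $\abs{z}$ is small; it does \emph{not} meet $B(0,\rho)$ merely because $\abs{z}<\rho$. So neither of your two auxiliary reflections is admissible in the hard case.

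The correct mechanism (and the paper's) is the opposite of interpolating through the origin: one builds a chain $x=z_0,z_1,\dots,z_k=y$ with \emph{strictly increasing} norms $\abs{z_i}<\abs{z_{i+1}}$, choosing consecutive points far apart laterally (nearly antipodal on their respective spheres) so that $\abs{\abs{z_i}^2-\abs{z_{i+1}}^2}<2\rho\abs{z_i-z_{i+1}}$ holds even though both norms are large — the best one can do in a single step is to increase the norm by a bit less than $2\rho$, so $k$ must grow like $(\abs{y}-\abs{x})/\rho$ and cannot be taken equal to $2$. Each link then gives $u(z_i)\ge u(z_{i+1})$ with the inequalities all pointing the same way, and they compose to $u(x)\ge u(y)$. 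One final point you would still need to address: the single-step comparison only holds for almost every pair, so the intermediate points must be chosen outside exceptional null sets; the paper handles this by observing that the set of admissible chains $(z_1,\dots,z_{k-1})$ has positive measure in $\R^{N(k-1)}$. Your alternative "cleanest route" via composing reflections is also not available as stated, since the identity $v^{i(H)}=(v\circ i^{-1})^H\circ i$ does not let you deduce invariance under $H$ from invariance under the admissible halfspaces without already knowing invariance of $v\circ i^{-1}$.
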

\begin{proof}
Proceeding as in the proof of lemma~\ref{lemmaSymmetrizationPolarization}, one obtains that $u(x) \ge u(y)$ for almost every $x, y \in \R^N$ such that $\abs{x} \le \abs{y}$, and
\[
  \abs{\abs{x}^2-\abs{y}^2} < 2\rho \abs{x-y},
\]
the latter condition coming from the restriction that the hyperplane that reflects $x$ on $y$ should intersect the ball $B(0,\rho)$.

Now assume that $x, y \in \R^N$ and $\abs{x} < \abs{y}$. There exists $k \in \N$ such that the set of points $(z_1 \dots, z_{k-1}) \in \R^{N(k-1)}$ for which one has for $i \in \{0, \dotsc, k-1\}$, $\abs{z_i} < \abs{z_{i+1}}$ and
\[
 \abs{\abs{z_i}^2-\abs{z_{i+1}}^2} < 2\rho \abs{z_i-z_{i+1}},
\]
with the convention $z_0=x$ and $z_k=y$, has positive measure.
By the first part of the proof, for almost every $x$ and $y$, one can ensure also that $u(z_i) \ge u(z_{i+1})$ for $1 \le i \le k-1$. This proves thus that for almost every $x, y \in \R^N$, if $\abs{x} < \abs{y}$, $u(x) \ge u(y)$, which implies in turn that $u^*=u$.
\end{proof}

\subsection{Approximating other symmetrizations}
The method that has been devised above is not limited to the approximation of the Schwarz symmetrization; it is in fact adapted to all the symmetrizations that have been approximated by polarizations: rearrangements on the sphere and on the hyperbolic space \cite{Baernstein1994}, Steiner symmetrization \cite{Baernstein1994, BrockSolynin2000}, spherical cap symmetrization \cite{SmetsWillem2003}, increasing rearrangement \cite{VanSchaftingen2006PAMS}, discrete symmetrization \cite{Pruss1998}.

For the sphere and the hyperbolic space, the definitions and the proofs are adapted straightforwardly. One can note that, on the sphere, no sign restriction on $u$ is needed anymore.

The Steiner symmetrization with respect to some $k$--dimensional affine subspace $S$ is defined in such a way that for every $\lambda> 0$ and every $(N-k)$--dimensional affine subspace $L$ that is normal to $S$, $\{x \in L \st u^*(x) > \lambda\}$ is a ball of $L$ centered around $L \cap S$, and has the same $(N-k)$--dimensional measure as $\{x \in L \st u (x) > \lambda \}$ \cite[definition 2.1]{VanSchaftingen2006TMNA}. If one takes $\breve{\mathcal{H}}_*$ to be the set of closed affine subspace $H  \subset \R^N$ such that $S$ is in the interior of $H$, a variant of theorem~\ref{theoremMain} holds.

The spherical cap symmetrization, or foliated Schwarz symmetrization with respect to some $k$--dimensional closed half affine subspace $S$ is defined similarly: for every $\lambda> 0$ and every $(N-k)$--dimensional sphere $L$ which is contained in an $(N-k+1)$--dimensional affine subspace that is normal to $\partial S$ and whose center lies on $\partial S$, the sublevel set $\{x \in L \st u^*(x) > \lambda\}$ is a geodesic ball of $L$ centered around $L \cap S$ and has the same $(N-k)$--dimensional measure as $\{x \in L \st u (x) > \lambda \}$ \cite[definition 2.3]{VanSchaftingen2006TMNA}. One takes then $\breve{\mathcal{H}}_*$ to be the set of closed affine subspaces $H  \subset \R^N$ such that $\partial H \supset \partial S$, and $S \cap H$ and a variant of theorem~\ref{theoremMain} holds.

Rearrangement can also be defined on a regular tree $\mathcal{T}_q$ \cite{Pruss1998}, with $q \ge 2$, which is a countably infinite graph that is connected, does not contain any cycle and in which every vertex has exactly $q$ edges associated to it. We identify $\mathcal{T}_q$ with its set of vertices. One can define on such a tree a spiral-like ordering \cite[definition 6.1]{Pruss1998}. For example, one has $\mathcal{T}_2 \sim \mathbf{Z}$ on which one can take the ordering $0 \prec 1 \prec -1 \prec 2 \prec -2\prec \dots$.
Given a nonnegative function $u : \mathcal{T}_q \to \R$, its symmetrization $u^*$ is the nonincreasing function such that for every $\lambda > 0$, $\{ x \in \mathcal{T}_q \st u^*(x) > \lambda\}$ and $\{ x \in \mathcal{T}_q \st u(x) > \lambda\}$ have the same number of elements. Polarizations are defined in terms of isometrical involutions $i$: one takes $u^i(x)=\max(u(x), u(i(x)))$ if $x \prec i(x)$ and $u^i(x)=\min(u(x), u(i(x)))$ otherwise.
Taking as a distance between two involutions the inverse of the diameter of the largest ball centered at the origin on which they coincide and using the tools of Pruss, one can then prove the counterpart of theorem~\ref{theoremMain}.
When $q=2$, one notes that the isometrical involutions of $\mathcal{T}_2$ form a countable set and $(H_n)_{n \ge 1}$ is then an enumeration in the set of polarizations.

In all these settings, the remark about approximating with a smaller set of polarizations holds. In the special case of the rearrangement on $\mathcal{T}_2 \sim \mathbf{Z}$, one can even take two polarizations, defined by the involutions $i_1(x)=x$ and $i_2(x)=1-x$.

\subsection{Approximation by symmetrization}
In some applications, for example in a proof of the Riesz--Sobolev inequality \cite{BrascampLiebLuttinger1974}, one needs to approximate symmetrization by other symmetrizations. The approach developped in \cite{VanSchaftingen2006PAMS,VanSchaftingen2006TMNA}, to unify the presentation of approximation of polarization and approximation by symmetrization also applies here:
One defines $\mathcal{S}$ to be the set of affine subspaces and closed half affine subspaces of $\R^N$. This set can be endowed with a partial order $\prec$ defined by $S \prec T$ if $S \subset T$ and $\partial S \subset \partial T$ \cite[definition 2.19]{VanSchaftingen2006TMNA}, and it can also be endowed with a metric \cite[section 2.4]{VanSchaftingen2006TMNA}. If $u \in L^p(\R^N)$ and $S \in \mathcal{S}$, depending on the nature of $S$, $u^S$ denotes the Schwarz, Steiner or spherical cap symmetrization, or the polarization. 

Proceeding as in the proof of theorem~\ref{theoremMain}, one proves that if $(T_n)_{n \ge 1}$ is dense in $\{T \in \mathcal{S} \st S \prec T\}$, and if one sets for $u \in L^p(\R^N)$ with $u \ge 0$,
\[
  \left\{
\begin{aligned}
u_0&=u,\\
u_{n+1}&=u_n^{T_1 \dots T_{n+1}},
\end{aligned}
  \right.
\]
then, $u_n \to u$ in $L^p(\R^N)$. 
%
%
\def\cprime{$'$}
\providecommand{\bysame}{\leavevmode\hbox to3em{\hrulefill}\thinspace}
\providecommand{\MR}{\relax\ifhmode\unskip\space\fi MR }
\providecommand{\MRhref}[2]{%
  \href{http://www.ams.org/mathscinet-getitem?mr=#1}{#2}
}
\providecommand{\href}[2]{#2}

\end{document}